\documentclass[11 pt]{amsart}
\usepackage{amsmath,amsthm,amssymb,enumerate, hyperref}
\usepackage{natbib}

\newcommand{\weak}{\stackrel{w}{\longrightarrow}}
\newcommand{\prob}{\stackrel{P}{\longrightarrow}}
\newcommand{\eid}{\stackrel{d}{=}}
\newcommand{\one}{{\bf 1}}

\newcommand{\reals}{{\mathbb R}}
\newcommand{\bbr}{\reals}

\newcommand{\bbn}{\protect{\mathbb N}}
\newcommand{\bbc}{\protect{\mathbb C}}

\newcommand{\A}{{\mathcal A}}
\newcommand{\F}{{\mathcal F}}

\newtheorem{theorem}{Theorem}[section]

\newtheorem{fact}{Fact}[section]

\newtheorem{remark}{Remark}[section]


\def\Var{{\rm Var}}
\def\E{{\rm E}}
\def\eesd{{\rm EESD}}
\def\esd{{\rm ESD}}

\DeclareMathOperator{\Tr}{Tr} 
\DeclareMathOperator{\rank}{Rank}

\numberwithin{equation}{section}

\begin{document}


\title[Folklore of free independence]{A note on the folklore of free independence}
\author{Arijit Chakrabarty}
\address{Theoretical Statistics and Mathematics Unit \\
Indian Statistical Institute \\
203 B. T. Road\\
Kolkata 700108, India}
\email{arijit.isi@gmail.com}

\author{Sukrit Chakraborty}
\address{Theoretical Statistics and Mathematics Unit \\
Indian Statistical Institute \\
203 B. T. Road\\
Kolkata 700108, India}
\email{sukrit049@gmail.com}

\author{Rajat Subhra Hazra}
\address{Theoretical Statistics and Mathematics Unit \\
Indian Statistical Institute \\
203 B. T. Road\\
Kolkata 700108, India}
\email{rajatmaths@gmail.com}

\subjclass{Primary 60B20; Secondary 46L54.}
\keywords{Voiculescu's theorem, random matrix theory, asymptotic free independence, Wishart}

\begin{abstract}
It is shown that a Wishart matrix of standard complex normal random variables is asymptotically freely independent of an independent random matrix, under minimal conditions, in two different sense of asymptotic free independence. 
\end{abstract}

\maketitle

\section{Introduction}\label{sec:intro}
Since the seminal discovery of \cite{voiculescu:1991}, there have been several folklores regarding free independence. For example, one such folklore is that any two independent Wigner matrices are asymptotically freely independent, and another is that any Wishart matrix is asymptotically freely independent of a deterministic matrix. While such folklores are true, more often than not, there are a few problems. The first and foremost problem is that the meaning of the phrase ``asymptotically freely independent'' varies with context. A widely used definition is in terms of the normalized expected trace (or without the expectation). Unfortunately, in this definition, the claim of asymptotic free independence can easily fail, in the absence of any other assumption. The counter example in \cite{male:2017} is noteworthy. This articulates the second problem with the folklore, which is that the assumptions are usually missing. Nevertheless, in the literature, there are several rigorous proofs of various versions of Voiculescu's theorem, see for example, the monographs \cite{nica:speicher:2006}, \cite{anderson:guionnet:zeitouni:2010} and \cite{mingo:speicher:2017}. The reader will notice that the versions in the above references are not monotonic in strength, that is, one version does not necessarily imply another. In other words, there is no general theorem regarding asymptotic free independence from which most results of interest follow.

This note is a modest attempt at settling some of the issues mentioned above in a specific example. Theorems \ref{t1} and \ref{t.main} claim asymptotic free independence of a Wishart matrix $W_N$ of standard complex normal random variables and an independent matrix $Y_N$, in two different definitions of asymptotic free independence. The former is the usual definition, in terms of normalized expected trace, while the latter is in terms of the limiting spectral distribution of random matrices, which is weaker than the former. In both the above theorems, the limiting spectral distribution of $Y_N$ is assumed to be compactly supported, at the  least. This assumption is relaxed in Theorem \ref{t3}, a consequence of which is that the claim is also significantly weakened. The proofs of Theorems \ref{t.main} and \ref{t3} is based on a truncation argument.

The authors chose to work with the complex normal distribution because they yield the strongest results in that the assumptions on $Y_N$ become minimal. Theorem 22.35 of \cite{nica:speicher:2006} astutely notes this, for example. It's worth noting that the results in \cite{hiai:petz:2000} are similar in spirit. Although the results are stated for a Wishart matrix, they hold for a Wigner matrix too.

\section{The results}\label{sec:results}
Let $(Z_{i,j}:i,j\in\bbn)$ be a family of i.i.d.\ standard complex Normal random variables. That is, $(\Re(Z_{i,j}):i,j\ge1)$ and $(\Im(Z_{i,j}):i,j\ge1)$ are independent families of i.i.d.\ real random variables from $N(0,1/2)$. Suppose that $(M_N:N\ge1)$ is a sequence of positive integers such that
\begin{equation}
\label{eq:lambda}\lim_{N\to\infty}\frac{N}{M_N}=\lambda\in(0,\infty)\,.
\end{equation}
For each $N\ge1$, let $X_N$ be the $M_N\times N$ random matrix defined by
\[
X_N(i,j):=Z_{i,j},\,1\le i\le M_N,\,1\le j\le N\,.
\]
For $N\ge1$, define an $N\times N$ random Hermitian matrix by
\[
W_N:=\frac1{M_N}X_N^*X_N\,.
\]
Notice that for $1\le i,j\le N$,
\[
W_N(i,j)=\frac1{M_N}\sum_{k=1}^{M_N}\overline{Z_{k,i}}Z_{k,j}\,.
\]
Hence, $W_N$ is a Wishart matrix. 

For a random Hermitian $N\times N$ matrix $Z$, its ``empirical spectral distribution'' and ``expected empirical spectral distribution'', denoted by $\esd(Z)$ and $\eesd(Z)$, respectively, are probability measures on $\bbr$, defined as
\begin{eqnarray*}
\esd(Z)&=&\frac1N\sum_{i=1}^N\one(\lambda_i\in\cdot)\,,\\
\eesd(Z)&=&\frac1N\sum_{i=1}^NP(\lambda_i\in\cdot)\,,
\end{eqnarray*}
where $\lambda_1,\ldots,\lambda_N$ are the eigenvalues of $Z$, counted with multiplicity.

It is well known that as $N\to\infty$,
\[
\esd(W_N)\to\nu_\lambda\,,
\]
weakly in probability, where $\nu_\lambda$, with $\lambda$ as in \eqref{eq:lambda}, is the Mar\v{c}enko-Pastur distribution, defined by
\[
\nu_\lambda(dx)=
\begin{cases}
\left(1-\frac1\lambda\right)\one(0\in dx)+\frac1{2\pi}\frac{\sqrt{(\lambda_+-x)(x-\lambda_-)}}{\lambda x}\one_{[\lambda_-,\lambda_+]}(x)\,dx,&\lambda>1\,,\\
\frac1{2\pi}\frac{\sqrt{(\lambda_+-x)(x-\lambda_-)}}{\lambda x}\one_{[\lambda_-,\lambda_+]}(x)\,dx,&\lambda\le1\,,
\end{cases}
\]
with $\lambda_\pm=(1\pm\sqrt\lambda)^2$.

For each $N\ge1$, $Y_N$ is an $N\times N$ random complex Hermitian matrix, \textbf{independent} of $(Z_{i,j}:i,j\in\bbn)$. The exact assumption on the spectrum of $Y_N$ will vary from result to result, and hence will be mentioned in the statements of the respective results. However, at the very least, there exists a (non-random) probability measure $\mu$ on $\bbr$ such that
\begin{equation}
\label{t.main.assume}\esd(Y_N)\to\mu\,,
\end{equation}
weakly in probability, as $N\to\infty$.

The statements of the following results are based on the theory of $C^*$-probability spaces. A reader unacquainted with this may look at \cite{nica:speicher:2006}. It is known that given probability measures $\mu_1$ and $\mu_2$ which are supported on a compact subset of $\bbr$, there exists a $C^*$-probability space $(\A,\varphi)$ and two freely independent self-adjoint elements $a_1,a_2\in\A$ such that
\[
\varphi\left(a_i^n\right)=\int_{-\infty}^\infty x^n\mu_i(dx),\,n\in\bbn,\,i=1,2\,.
\]
The probability measures $\mu_1$ and $\mu_2$ are called the distributions of $a_1$ and $a_2$, and denoted by ${\mathcal L}(a_1)$ and ${\mathcal L}(a_2)$, respectively.

The first result shows asymptotic free independence between $W_N$ and $Y_N$ in the sense of normalized expected trace.

\begin{theorem}\label{t1}
Assume that  $\mu$ is compactly supported, and that for each $n\in\bbn$,
\begin{eqnarray}
\label{f2.eq1}\lim_{N\to\infty}\E\left[\frac1N\Tr(Y_N^n)\right]&=&\int_{-\infty}^\infty x^n\mu(dx)\,,\\
\label{f2.eq2}\text{ and }\lim_{N\to\infty}\Var\left[\frac1N\Tr(Y_N^n)\right]&=&0\,.
\end{eqnarray}
Then, there exists a $C^*$-probability space $(\A,\varphi)$, in which there are two freely independent self-adjoint elements $w$ and $y$, having distribution $\nu_\lambda$ and $\mu$, respectively, and satisfying the following. For every polynomial $p$ in two variables having complex coefficients,
\begin{equation}
\label{t1.claim}\lim_{N\to\infty}\frac1N\E\Tr\left[p\left(W_N,Y_N\right)\right]=\varphi\bigl(p(w,y)\bigr)\,.
\end{equation}
Consequently, if $p\left(W_N,Y_N\right)$ has real eigenvalues, a.s., for all $N$, then as $N\to\infty$,
\begin{equation}
\label{t.main.claim}\eesd\left(p\left(W_N,Y_N\right)\right)\weak{\mathcal L}\left(p\left(w,y\right)\right)\,.
\end{equation}
\end{theorem}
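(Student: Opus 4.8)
The existence of a $C^*$-probability space $(\A,\varphi)$ carrying freely independent self-adjoint elements $w,y$ with $\mathcal L(w)=\nu_\lambda$ and $\mathcal L(y)=\mu$ is immediate from the free-product construction quoted above, since both $\nu_\lambda$ and $\mu$ are compactly supported. Hence the whole content of \eqref{t1.claim} is the asserted convergence of moments. By linearity of $\Tr$ and $\E$ it suffices to treat monomials, and because $w$ and $y$ are free with prescribed individual laws, $\varphi(p(w,y))$ is a fixed number determined by the moments of $\nu_\lambda$ and $\mu$ through the defining relations of freeness. The task therefore reduces to showing that each mixed moment
\[
\frac1N\E\Tr\left[W_N^{n_1}Y_N^{m_1}\cdots W_N^{n_k}Y_N^{m_k}\right]
\]
converges to the free mixed moment $\varphi\!\left(w^{n_1}y^{m_1}\cdots w^{n_k}y^{m_k}\right)$; granting individual moment convergence, this is precisely asymptotic free independence.

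The mechanism is the bi-unitary invariance of the Gaussian matrix $X_N$: for deterministic $V\in U(N)$ one has $X_NV\eid X_N$, and consequently $VW_NV^*\eid W_N$. Let $U_N\in U(N)$ be Haar distributed and independent of $(Z_{i,j})$ and of $Y_N$. Since $Y_N$ is independent of $(W_N,U_N)$, the pairs $(W_N,Y_N)$ and $(U_NW_NU_N^*,Y_N)$ have the same law, so every mixed moment above is unchanged when $W_N$ is replaced by $U_NW_NU_N^*$. Conditioning on $W_N$ and $Y_N$ and integrating out $U_N$ by the Weingarten calculus for $U(N)$, I would expand each trace as a sum over index tuples; the Weingarten expansion yields a sum over pairs of permutations, and a power count in $N$ shows that the surviving terms are exactly those indexed by the non-crossing structures dictated by freeness, all others contributing a relative factor $O(N^{-2})$. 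I expect this power counting---isolating the genus-zero terms and checking that the interleaved $Y_N$-blocks create no extra leading contributions---to be the main obstacle.

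Two inputs then identify the limit. The moments $\tfrac1N\E\Tr[W_N^{\,n}]$ converge to the Mar\v{c}enko--Pastur moments $\int x^n\,\nu_\lambda(dx)$ by the classical moment convergence for Wishart matrices; the moments $\tfrac1N\Tr[Y_N^{\,m}]$ converge in $L^2$ to $\int x^m\,\mu(dx)$, their means by \eqref{f2.eq1} and their variances to zero by \eqref{f2.eq2}, so that upon taking expectations $Y_N$ contributes exactly its limiting moments and its fluctuations are negligible. Substituting these into the surviving non-crossing terms reassembles precisely the alternating moment of $w$ and $y$ computed under freeness, giving \eqref{t1.claim}.

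Finally, for \eqref{t.main.claim}, compact support of $\nu_\lambda$ and $\mu$ makes $w$ and $y$ bounded, so when $p(w,y)$ is self-adjoint its spectrum lies in a fixed compact interval and $\mathcal L(p(w,y))$ is compactly supported, hence determined by its moments. Under the stated hypothesis $p(W_N,Y_N)$ is Hermitian, so the moments of $\eesd(p(W_N,Y_N))$ are the numbers $\tfrac1N\E\Tr[p(W_N,Y_N)^k]$; applying \eqref{t1.claim} to the polynomials $p^k$, $k\in\bbn$, shows each of these converges to the corresponding moment of $\mathcal L(p(w,y))$. Convergence of all moments to those of a compactly supported (thus moment-determinate) measure yields weak convergence, which is \eqref{t.main.claim}.
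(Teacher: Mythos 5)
Your proposal follows essentially the same route as the paper: exploit unitary invariance of the Gaussian matrix to replace $W_N$ by $U_NW_NU_N^*$ with $U_N$ Haar and independent, integrate out $U_N$ by the Weingarten calculus, and identify the surviving terms in the power count with the free mixed moments, feeding in Wishart moment convergence and the $L^2$ convergence of $\frac1N\Tr(Y_N^m)$ from \eqref{f2.eq1}--\eqref{f2.eq2}; the deduction of \eqref{t.main.claim} via moment determinacy of the compactly supported limit is also the paper's intended argument. The power-counting step you flag as the main obstacle is exactly what the paper outsources to the Weingarten asymptotics of Nica and Speicher (their Theorem 23.14 and the bound $\#\alpha+\#(\alpha^{-1}\beta)+\#(\beta^{-1}\gamma)\le 2n+1$), so your outline is sound and matches the published proof.
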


\begin{remark}
When $Y_N$ is deterministic, the assumptions of Theorem \ref{t1} just mean that
\[
\lim_{n\to\infty}\frac1N\E\left(\Tr(Y_N^n)\right)=\int_{-\infty}^\infty x^n\mu(dx)\,,
\]
which is stronger than \eqref{t.main.assume}.
\end{remark}

\begin{remark}\label{rem1}
The claim \eqref{t.main.claim} is an immediate consequence of \eqref{t1.claim}, whenever $p$ is such that the eigenvalues of $p(W_N,Y_N)$ are a.s.\ real. For example, $W_N$ is non-negative definite implies that the above holds for
\[
p(x,y)=xy\,.
\]
\end{remark}

In the next result, both -- the hypotheses and the claim are weakened to \eqref{t.main.assume} and \eqref{t.main.claim}, respectively. In other words, this results proves asymptotic free independence in the sense of \eqref{t.main.claim} as opposed to \eqref{t1.claim}.

\begin{theorem}\label{t.main}
If $\mu$, as in \eqref{t.main.assume} is compactly supported, then, for every polynomial $p$ in two variables having complex coefficients,  such that $p\left(W_N,Y_N\right)$ has real eigenvalues, a.s., for all $N$, then \eqref{t.main.claim} holds.
\end{theorem}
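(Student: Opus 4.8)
The plan is to prove Theorem~\ref{t.main} by a truncation argument that reduces it to Theorem~\ref{t1}. Fix $K>0$ with $\mathrm{supp}(\mu)\subseteq[-K,K]$ and a truncation level $L>K$. Writing the spectral decomposition $Y_N=\sum_{i=1}^N\beta_iu_iu_i^*$, set $\hat Y_N:=\sum_{i=1}^N\beta_i\one(|\beta_i|\le L)u_iu_i^*$, a Hermitian matrix with $\|\hat Y_N\|\le L$ for which $Y_N-\hat Y_N$ has rank $R_N:=\#\{i:|\beta_i|>L\}$. Since $L$ lies outside $\mathrm{supp}(\mu)$ and $\esd(Y_N)\to\mu$ weakly in probability, $R_N/N=\esd(Y_N)\bigl((-L,L)^c\bigr)\to0$ in probability, and, being bounded by $1$, also $\E[R_N/N]\to0$.

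First I would settle the truncated pair $(W_N,\hat Y_N)$ using Theorem~\ref{t1}. The truncation relocates only a vanishing fraction of eigenvalues, so $\esd(\hat Y_N)\to\mu$ weakly in probability; as all these measures are supported in the fixed compact set $[-L,L]$, the bounded continuous map $x\mapsto x^n$ gives $\frac1N\Tr(\hat Y_N^n)\to\int x^n\,d\mu$ in probability, and bounded convergence then upgrades this to \eqref{f2.eq1} and \eqref{f2.eq2}. Hence Theorem~\ref{t1} applies to $(W_N,\hat Y_N)$; since the limiting distribution of $\hat Y_N$ is again $\mu$, taking the polynomial in \eqref{t1.claim} to be $p^k$ yields
\[
\lim_{N\to\infty}\frac1N\E\Tr\bigl[p(W_N,\hat Y_N)^k\bigr]=\varphi\bigl(p(w,y)^k\bigr),\qquad k\in\bbn.
\]
Because $w$ and $y$ are bounded, $\mathcal{L}(p(w,y))$ is compactly supported, so this convergence of all moments is equivalent to $\eesd(p(W_N,\hat Y_N))\weak\mathcal{L}(p(w,y))$. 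Notably the limit is independent of $L$: every $L>K$ produces the same target $\mathcal{L}(p(w,y))$.

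It remains to transfer this conclusion from $\hat Y_N$ to $Y_N$, and this is the step I expect to be the main obstacle. Expanding each monomial of $p$ telescopically and using $\rank(ABC)\le\rank(B)$ shows $\rank\bigl(p(W_N,Y_N)-p(W_N,\hat Y_N)\bigr)\le C_pR_N$, with $C_p$ depending only on $p$; thus replacing $\hat Y_N$ by $Y_N$ perturbs $p(W_N,\hat Y_N)$ by a matrix of vanishing rank. Were $p(W_N,Y_N)$ Hermitian, Bai's rank inequality would bound the Kolmogorov distance between the two spectral distributions by $C_pR_N/N\to0$ and finish the proof. The difficulty is that $p(W_N,Y_N)$ is in general not Hermitian -- only its spectrum is assumed real -- while the truncation error, though of vanishing rank, has large operator norm, since it carries exactly the discarded large eigenvalues of $Y_N$; for non-normal matrices a perturbation of this type can displace many eigenvalues, so the rank inequality cannot be invoked off the shelf.

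To overcome this I would use the real-spectrum hypothesis to pass to a genuinely Hermitian comparison. In the representative case $p(x,y)=xy$ this is explicit: on the event $\{W_N>0\}$ the matrices $W_NY_N$ and $W_N\hat Y_N$ are similar to the Hermitian matrices $W_N^{1/2}Y_NW_N^{1/2}$ and $W_N^{1/2}\hat Y_NW_N^{1/2}$, which differ by $W_N^{1/2}(Y_N-\hat Y_N)W_N^{1/2}$ of rank at most $R_N$; Bai's inequality applies to these Hermitian representatives, whose eigenvalues coincide with those of $W_NY_N$ and $W_N\hat Y_N$, and a triangle inequality then delivers \eqref{t.main.claim}. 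For a general admissible $p$ I would seek a corresponding self-adjoint representative of $p(W_N,Y_N)$ -- built from the assumed reality of its spectrum together with the positivity of $W_N$ and the norm bound $\|W_N\|\to(1+\sqrt\lambda)^2$ -- so that the same Hermitian rank inequality governs the comparison. Producing such a representative, and thereby justifying the interchange of the limits $N\to\infty$ and $L\to\infty$, is the crux of the argument.
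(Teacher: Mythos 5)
Your strategy coincides with the paper's: truncate the spectrum of $Y_N$ at a fixed level, verify that the truncated matrix satisfies \eqref{f2.eq1} and \eqref{f2.eq2} so that Theorem~\ref{t1} applies to it, and transfer the conclusion back to $Y_N$ through a rank bound coming from Fact~\ref{f1}. (Two side remarks: the paper performs the truncation through the measurable Schur decomposition of Fact~\ref{f6}, which settles the measurability of your $\hat Y_N$; and no interchange of the limits $N\to\infty$ and $L\to\infty$ is needed here, since $\mu$ is compactly supported and the truncation level is fixed once and for all --- that interchange only arises in Theorem~\ref{t3}.) The genuine gap is that you do not complete the transfer step: you establish $d\bigl(\eesd(p(W_N,Y_N)),\eesd(p(W_N,\hat Y_N))\bigr)\to0$ only for $p(x,y)=xy$, and for a general admissible $p$ you explicitly leave open the construction of a Hermitian representative. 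Since the theorem quantifies over \emph{all} polynomials $p$ for which $p(W_N,Y_N)$ has a.s.\ real spectrum, the proposal as written does not prove it. The paper closes this step in one stroke: Fact~\ref{f1}, bounded convergence, and then Fact~\ref{f7} applied directly to $p(W_N,Y_N)$ and $p(W_N,Y_N^\prime)$.

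That said, the obstruction that stopped you is real, and the paper's handling of it is open to exactly the objection you raise. Fact~\ref{f7} is stated (and is true, via interlacing) for Hermitian matrices, whereas $p(W_N,Y_N)$ is only assumed to have real eigenvalues; for such matrices the rank inequality is false in general. Concretely, the companion matrices of $z^N$ and of $\prod_{i=1}^N(z-\mu_i)$ with distinct $\mu_i\in[1,2]$ differ in a single column, hence by a rank-one matrix, and both have real spectra, yet the Kolmogorov distance between their ESDs equals $1$, not $1/N$. So the paper's appeal to Fact~\ref{f7} --- together with its implicit, unverified assumption that $p(W_N,Y_N^\prime)$ also has real eigenvalues, which is needed both for $\eesd\bigl(p(W_N,Y_N^\prime)\bigr)$ to be a measure on $\bbr$ and for the appeal to Theorem~\ref{t1} and Remark~\ref{rem1} --- is not justified at the stated level of generality, and your refusal to invoke the rank inequality off the shelf is sound. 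Your repair for $p(x,y)=xy$ is correct, and can even avoid conditioning on $\{W_N>0\}$ (an event of probability zero when $M_N<N$): since $AB$ and $BA$ always have the same characteristic polynomial, the eigenvalues of $W_NY_N$ coincide with those of the Hermitian matrix $W_N^{1/2}Y_NW_N^{1/2}$, whose truncation error $W_N^{1/2}(Y_N-\hat Y_N)W_N^{1/2}$ has rank at most $R_N$, so Fact~\ref{f7} applies to the Hermitian representatives. But this uses the specific form of $xy$; producing such a rank-stable Hermitian representative for every admissible $p$ is precisely what is missing --- in your proposal avowedly, and in the paper silently.
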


The last result deals with the case when the support of  $\mu$ is possibly unbounded. For measures with possibly unbounded support, `$\boxplus$' and `$\boxtimes$' denote their free additive and multiplicative convolutions, respectively. For the latter, at least of one of the two measures has to be supported on the non-negative half line. See \cite{bercovici:voiculescu:1993} for the details.

\begin{theorem}\label{t3}
If \eqref{t.main.assume} holds for a probability measure $\mu$ which is not necessarily compactly supported, then 
\begin{eqnarray*}
\eesd(Y_N+W_N)&\weak&\mu\boxplus\nu_\lambda\,,\\
\text{and, }\eesd(Y_NW_N)&\weak&\mu\boxtimes\nu_\lambda\,,
\end{eqnarray*}
as $N\to\infty$.
\end{theorem}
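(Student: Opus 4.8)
The plan is to reduce the general case to the compactly supported case already established in Theorem \ref{t.main}, by a spectral truncation of $Y_N$, and then to remove the truncation. Fix a level $K>0$ with $\mu(\{-K\})=\mu(\{K\})=0$, put $g_K(x)=x\,\one_{[-K,K]}(x)$, and define $Y_N^{(K)}=g_K(Y_N)$ via the functional calculus; thus $Y_N^{(K)}$ is the Hermitian matrix obtained from $Y_N$ by zeroing out every eigenvalue of modulus larger than $K$. Since $g_K$ is bounded and its discontinuity set $\{\pm K\}$ is $\mu$-null, the mapping theorem gives $\esd(Y_N^{(K)})\to\mu_K$ weakly in probability, where $\mu_K:=\mu\circ g_K^{-1}$ is compactly supported; and dominated convergence gives $\mu_K\to\mu$ weakly as $K\to\infty$.

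Because $\mu_K$ is compactly supported, Theorem \ref{t.main} applies to the pair $(W_N,Y_N^{(K)})$. Taking $p(x,y)=x+y$, whose value is Hermitian, and $p(x,y)=yx$, whose value has real eigenvalues since $W_N$ is non-negative definite (cf.\ Remark \ref{rem1}), I obtain
\[
\eesd\!\left(Y_N^{(K)}+W_N\right)\weak \mu_K\boxplus\nu_\lambda,\qquad \eesd\!\left(Y_N^{(K)}W_N\right)\weak \mu_K\boxtimes\nu_\lambda,
\]
as $N\to\infty$; here the limits are identified using that the distribution of a sum, respectively a product (legitimate since $\nu_\lambda$ is supported on $[0,\infty)$), of the freely independent elements $w$ and $y_K$ equals $\mu_K\boxplus\nu_\lambda$, respectively $\mu_K\boxtimes\nu_\lambda$.

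Next I would control the truncation error uniformly in $N$. The difference $Y_N-Y_N^{(K)}$ has rank equal to the number of eigenvalues of $Y_N$ outside $[-K,K]$, so the rank inequality $\|F^{A}-F^{B}\|_\infty\le N^{-1}\rank(A-B)$ for Hermitian matrices, together with $(Y_N+W_N)-(Y_N^{(K)}+W_N)=Y_N-Y_N^{(K)}$ and Jensen's inequality, yields
\[
\sup_{t\in\bbr}\bigl|\eesd(Y_N+W_N)((-\infty,t])-\eesd(Y_N^{(K)}+W_N)((-\infty,t])\bigr|\le \eesd(Y_N)(\{|x|>K\}).
\]
For the multiplicative statement I would pass to the Hermitian matrices $W_N^{1/2}Y_NW_N^{1/2}$ and $W_N^{1/2}Y_N^{(K)}W_N^{1/2}$, which share their spectra with $Y_NW_N$ and $Y_N^{(K)}W_N$ and whose difference $W_N^{1/2}(Y_N-Y_N^{(K)})W_N^{1/2}$ still has rank at most $\rank(Y_N-Y_N^{(K)})$, so the same bound holds. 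Finally, $\esd(Y_N)\to\mu$ weakly in probability forces $\eesd(Y_N)\to\mu$ weakly, hence the family $\{\eesd(Y_N):N\ge1\}$ is tight, and therefore $\limsup_{K\to\infty}\sup_{N\ge1}\eesd(Y_N)(\{|x|>K\})=0$.

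The three ingredients are then combined by a three-epsilon argument, using the continuity of the free convolutions in the weak topology (\cite{bercovici:voiculescu:1993}) to pass $\mu_K\boxplus\nu_\lambda\to\mu\boxplus\nu_\lambda$ and $\mu_K\boxtimes\nu_\lambda\to\mu\boxtimes\nu_\lambda$ as $K\to\infty$: given $\vep>0$, pick $K$ so large that both the uniform truncation bound and the free-convolution discrepancy fall below $\vep$, and then let $N\to\infty$ via the truncated conclusion. I expect the main obstacle to be exactly this interchange of the limits $N\to\infty$ and $K\to\infty$; all the difficulty introduced by dropping compact support is concentrated in making the truncation bound uniform in $N$ — which the rank inequality and tightness supply — and in the weak continuity of the free convolutions for measures of unbounded support.
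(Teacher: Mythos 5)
Your proposal is correct and follows essentially the same route as the paper: truncate the spectrum of $Y_N$ at a $\mu$-continuity level, apply Theorem \ref{t.main} to the truncated matrix, control the Kolmogorov distance between the two EESDs by the normalized expected rank of the truncation, and remove the truncation using the weak continuity of $\boxplus$ and $\boxtimes$ from \cite{bercovici:voiculescu:1993}. The only cosmetic differences are that the paper realizes the truncation through the measurable triangularization of Fact \ref{f6} rather than functional calculus and bounds the error by $\mu\left([-M,M]^c\right)$ at continuity points rather than via tightness of $\{\eesd(Y_N)\}$; your $W_N^{1/2}$-conjugation in the multiplicative case is a careful touch that the paper leaves implicit.
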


\begin{remark}\label{rem.wigner}

Theorems \ref{t1} -- \ref{t3} hold true, if Wishart matrix is replaced by a Wigner matrix with standard complex normal entries, and Mar\v{c}enko-Pastur distribution is replaced by the semicircle law.

\end{remark}

\section{Some facts}\label{sec:facts}

For the proofs of the results mentioned in Section \ref{sec:results}, a few facts will be needed, which are stated here. The proofs are omitted because the results are either elementary or can be found in a cited reference.

The first one is a comparison between ranks of deterministic matrices.

\begin{fact}\label{f1}
Let $p$ be a polynomial in two variables, with complex coefficients. Then, there exists a finite constant $C$, depending only on the polynomial $p$, such that
\[
\rank\left(p(A,B)-p(A^\prime,B)\right)\le C\rank(A-A^\prime)\,,
\]
for square matrices $A,A^\prime,B$ of the same order. 
\end{fact}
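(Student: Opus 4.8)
The plan is to reduce everything to two elementary properties of matrix rank: subadditivity, $\rank(M+M')\le\rank(M)+\rank(M')$, and the sandwich form of submultiplicativity, $\rank(MNM')\le\rank(N)$. First I would write $p$ as a finite complex-linear combination of noncommutative monomials (words) in the two formal variables $u,v$, where $u$ is to be substituted by $A$ (or $A'$) and $v$ by $B$. Since $\rank(cM)\le\rank(M)$ for every scalar $c$, subadditivity reduces the claim to a single word. Thus it suffices to show, for a word $W=L_1L_2\cdots L_\ell$ with each letter $L_i\in\{u,v\}$, that $\rank\bigl(W(A,B)-W(A',B)\bigr)$ is bounded by the number of $u$-letters of $W$ times $\rank(A-A')$.

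For a fixed word I would use a telescoping identity. Writing $P_i$ for the $i$-th factor of $W(A,B)$ and $Q_i$ for the $i$-th factor of $W(A',B)$ — so that $P_i=Q_i=B$ wherever $L_i=v$, while $P_i=A$ and $Q_i=A'$ wherever $L_i=u$ — one has
\[
\prod_{i=1}^\ell P_i-\prod_{i=1}^\ell Q_i=\sum_{j=1}^\ell\Bigl(\prod_{i<j}Q_i\Bigr)(P_j-Q_j)\Bigl(\prod_{i>j}P_i\Bigr)\,.
\]
Every summand with $L_j=v$ vanishes, since then $P_j-Q_j=0$; every summand with $L_j=u$ has sandwiched factor $P_j-Q_j=A-A'$, so by submultiplicativity its rank is at most $\rank(A-A')$. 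Applying subadditivity across the at most $\deg W$ surviving terms yields the per-word bound.

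Summing over the finitely many words occurring in $p$ then gives the statement with $C=\sum_{W}\#\{u\text{-letters in }W\}$, a quantity depending on $p$ alone. I do not expect a genuine obstacle here; the only point requiring care is the bookkeeping forced by noncommutativity — one must keep the primed factors to one side of the inserted difference and the unprimed factors to the other so that the telescoping collapses correctly, and one must use that only $A$ is replaced while $B$ is common to both products, which is exactly what makes each $v$-term drop out.
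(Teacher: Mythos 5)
Your proof is correct and complete. The paper itself omits any proof of Fact \ref{f1}, declaring it elementary, and your argument --- decomposing $p$ into noncommutative words, telescoping the difference of products so that only the $u$-positions contribute, and then applying $\rank(XYZ)\le\rank(Y)$ together with subadditivity of rank --- is precisely the standard elementary argument the authors have in mind, yielding the explicit constant $C=\sum_W\#\{u\text{-letters in }W\}$ depending only on $p$.
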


The next result, which is also based on rank, follows  from Theorem A.43, page 503 of \cite{bai:silverstein:2010}.

\begin{fact}\label{f7}
For probability measures $\mu_1$ and $\mu_2$ on $\bbr$, let $d(\mu_1,\mu_2)$ denote their sup distance, defined by
\[
d(\mu_1,\mu_2):=\sup_{x\in\bbr}\left|\mu_1\bigl((-\infty,x]\bigr)-\mu_2\bigl((-\infty,x]\bigr)\right|\,.
\]
For $N\times N$ random Hermitian matrices $A$ and $B$, it holds that
\[
d\left(\eesd(A),\eesd(B)\right)\le\frac1N\E\left[\rank(A-B)\right]\,.
\]
\end{fact}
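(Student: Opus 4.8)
The plan is to reduce this to the pointwise (deterministic) rank bound and then pass to expectations. The cited Theorem A.43 of \cite{bai:silverstein:2010} gives, for any two fixed (non-random) Hermitian $N\times N$ matrices $A_0,B_0$, the inequality
\[
d\bigl(\esd(A_0),\esd(B_0)\bigr)\le\frac1N\rank(A_0-B_0)\,.
\]
Since $A$ and $B$ are random, this holds for every realization, so I would first record the almost sure inequality
\[
d\bigl(\esd(A),\esd(B)\bigr)\le\frac1N\rank(A-B)\,.
\]

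The second ingredient is to connect the sup distance between the \emph{expected} spectral distributions to the expectation of the sup distance between the \emph{empirical} ones. Writing $F^A(x):=\esd(A)\bigl((-\infty,x]\bigr)$ for the (random) empirical CDF, the defining relation is $\eesd(A)\bigl((-\infty,x]\bigr)=\E\bigl[F^A(x)\bigr]$, and likewise for $B$. Consequently,
\[
d\bigl(\eesd(A),\eesd(B)\bigr)=\sup_{x\in\bbr}\left|\E\bigl[F^A(x)-F^B(x)\bigr]\right|\le\sup_{x\in\bbr}\E\left[\left|F^A(x)-F^B(x)\right|\right]\le\E\left[\sup_{x\in\bbr}\left|F^A(x)-F^B(x)\right|\right]\,,
\]
where the first step is Jensen's inequality (or just the triangle inequality for the expectation) and the second is the standard bound $\sup\E\le\E\sup$. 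The right-hand side is exactly $\E\bigl[d(\esd(A),\esd(B))\bigr]$.

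Combining the two pieces and taking expectations in the almost sure rank inequality yields
\[
d\bigl(\eesd(A),\eesd(B)\bigr)\le\E\bigl[d(\esd(A),\esd(B))\bigr]\le\frac1N\E\bigl[\rank(A-B)\bigr]\,,
\]
which is the asserted bound. There is no serious obstacle here, since the deterministic rank estimate is quoted from \cite{bai:silverstein:2010}; the only point requiring a moment's care is the interchange step $\sup_x\E\le\E\sup_x$, which is why the bound is stated with the sup distance on the left but the expectation of the rank on the right, rather than in the reverse order.
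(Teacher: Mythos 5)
Your proposal is correct and is essentially the paper's own argument: the paper simply cites the deterministic rank inequality (Theorem A.43 of \cite{bai:silverstein:2010}) and leaves the passage to expectations implicit, which is exactly the step you carry out via $\eesd(A)\bigl((-\infty,x]\bigr)=\E\bigl[\esd(A)\bigl((-\infty,x]\bigr)\bigr]$ together with $\sup_x\E\le\E\sup_x$. The only detail worth a footnote is the measurability of $\sup_x\left|F^A(x)-F^B(x)\right|$, which follows since the supremum may be taken over rationals by right-continuity of the CDFs.
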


The next two facts are elementary.

\begin{fact}\label{f2}
For each $N\ge1$, suppose that $Y_N$ is an $N\times N$ random Hermitian matrix satisfying \eqref{f2.eq1} and \eqref{f2.eq2}. Then, it holds that for any $n\ge1$ and $k_1,\ldots,k_n\ge0$,
\[
\lim_{N\to\infty}N^{-n}\E\left(\prod_{i=1}^n\Tr\left(Y_N^{k_i}\right)\right)=\prod_{i=1}^n\alpha_{k_i}\,,
\]
where $\alpha_n$ denotes the right hand side of \eqref{f2.eq1}, for $n\ge1$.
\end{fact}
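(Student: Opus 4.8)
The plan is to set $T_k:=\frac{1}{N}\Tr(Y_N^k)$ (suppressing the dependence on $N$) and to show that $T_{k_1},\dots,T_{k_n}$ converge to the constants $\alpha_{k_1},\dots,\alpha_{k_n}$ jointly in a strong enough sense that the expectation of their product converges to the product of the limits. Since $Y_N$ is Hermitian, each $T_k$ is real, and \eqref{f2.eq1}--\eqref{f2.eq2} say precisely that $\E[T_k]\to\alpha_k$ and $\Var(T_k)\to0$, so $T_k\to\alpha_k$ in probability for each fixed $k$; consequently $\prod_{i=1}^n T_{k_i}\to\prod_{i=1}^n\alpha_{k_i}$ in probability. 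The whole game is therefore to upgrade this to $L^1$-convergence, for which it suffices to check that the family $\{\prod_{i=1}^n T_{k_i}:N\ge1\}$ is uniformly integrable.

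The main obstacle is that uniform integrability cannot be extracted from the variance bound \eqref{f2.eq2} alone: centred variables with vanishing variance can have third and higher moments bounded away from zero, so the naive estimate of $\E[\prod_i(T_{k_i}-\E T_{k_i})]$ by products of standard deviations breaks down as soon as three or more factors are present. The observation that rescues the argument is that higher moments of $T_k$ are automatically controlled by the hypothesis \eqref{f2.eq1} at larger exponents. Writing $\lambda_1,\dots,\lambda_N$ for the (real) eigenvalues of $Y_N$ and viewing $\frac1N\sum_j\delta_{\lambda_j}$ as a probability measure, Jensen's inequality gives the pointwise bounds
\[
|T_k|\le\frac1N\sum_{j=1}^N|\lambda_j|^k\le\Bigl(\frac1N\sum_{j=1}^N\lambda_j^{2k}\Bigr)^{1/2}=T_{2k}^{1/2},
\]
and, for any integer $s\ge1$,
\[
T_{2k}^{\,s}=\Bigl(\frac1N\sum_{j=1}^N\lambda_j^{2k}\Bigr)^{s}\le\frac1N\sum_{j=1}^N\lambda_j^{2ks}=T_{2ks}.
\]
Combining the two gives $|T_k|^{2s}\le T_{2ks}$, whence $\E[|T_k|^{2s}]\le\E[T_{2ks}]$, which converges to $\alpha_{2ks}$ by \eqref{f2.eq1} and is therefore bounded in $N$. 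Thus for every fixed $k$ all even moments of $T_k$ are bounded uniformly in $N$.

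With these uniform moment bounds in hand, the rest is routine. By the generalized H\"older inequality,
\[
\E\Bigl[\Bigl(\prod_{i=1}^n T_{k_i}\Bigr)^2\Bigr]\le\prod_{i=1}^n\bigl(\E[\,|T_{k_i}|^{2n}\,]\bigr)^{1/n},
\]
and the right-hand side is bounded uniformly in $N$ by the previous paragraph; a family bounded in $L^2$ is uniformly integrable. Since $\prod_{i=1}^n T_{k_i}$ converges to $\prod_{i=1}^n\alpha_{k_i}$ in probability and is uniformly integrable, it converges in $L^1$, and so its expectation converges to $\prod_{i=1}^n\alpha_{k_i}$, which is exactly the claim. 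Indices $k_i=0$ are harmless, since $T_0=1$ identically and $\alpha_0=1$, so such factors may simply be dropped.
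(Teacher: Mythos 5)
Your proof is correct. There is nothing in the paper to compare it against: Fact \ref{f2} is one of the statements whose proof the authors omit in Section \ref{sec:facts} as elementary, so your argument supplies the missing details. The overall structure --- convergence in probability of the normalized traces $T_k=N^{-1}\Tr(Y_N^k)$ via Chebyshev, then an upgrade to $L^1$ via uniform integrability --- is the natural one, and the real content is your pair of Jensen bounds $|T_k|^{2s}\le T_{2k}^{s}\le T_{2ks}$, which convert the hypothesis \eqref{f2.eq1}, invoked at the larger exponents $2k_i n$, into uniform-in-$N$ control of all even moments of the $T_{k_i}$; generalized H\"older then gives $L^2$-boundedness of the product, hence uniform integrability, and Vitali's theorem concludes. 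You are also right that this extra input is unavoidable: \eqref{f2.eq2} alone says nothing about centered moments of order three and higher, so a naive expansion of the product into centered factors estimated by Cauchy--Schwarz stalls at $n\ge3$. Two minor points worth making explicit in a written version: first, your argument uses \eqref{f2.eq1} for \emph{all} exponents in $\bbn$, not just the $k_i$ appearing in the claim --- this is indeed what the hypothesis of Fact \ref{f2} provides, but it deserves a remark; second, the expectations involved are a priori guaranteed finite only for $N$ large (a convergent sequence of expectations of the nonnegative variables $T_{2k_i n}$ is eventually finite), so the uniform integrability and the limit should be read along a tail $N\ge N_0$, which is harmless since the claim is asymptotic.
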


\begin{fact}\label{f3}
Let $Z_1,\ldots,Z_N$ follow i.i.d.\ standard complex Normal, that is, for each $i=1,\ldots,N$, real and imaginary parts of $Z_i$ are independent $N(0,1/\sqrt2)$. If $Z$ denotes the column vector whose $i$-th component is $Z_i$, and $U$ is an $N\times N$ deterministic unitary matrix, then the components of $UZ$ are also i.i.d.\ from standard complex Normal. 
\end{fact}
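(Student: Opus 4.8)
The plan is to deduce the unitary invariance of the standard complex Gaussian from the orthogonal invariance of the standard real Gaussian, by passing to the realification $\bbc^N\cong\bbr^{2N}$. First I would record that, because the coordinates $Z_1,\dots,Z_N$ are i.i.d.\ with independent real and imaginary parts, each mean-zero normal with a common variance, the joint law of the column vector $Z$ has a density on $\bbr^{2N}$ proportional to $\exp(-c\,z^*z)$ for a constant $c>0$; in particular the density depends on $z$ only through $\|z\|^2=z^*z$. The precise value of the variance (and hence of $c$) plays no role below.

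Next I would set up the realification explicitly. Writing $Z=X+iY$ with $X,Y\in\bbr^N$, and $U=A+iB$ with real $N\times N$ matrices $A,B$, the map $Z\mapsto UZ$ corresponds, in the coordinates $(X^\top,Y^\top)^\top\in\bbr^{2N}$, to multiplication by the block matrix
\[
\widetilde U=\begin{pmatrix} A & -B\\ B & A\end{pmatrix}.
\]
A direct computation shows that $\widetilde U^\top\widetilde U=I_{2N}$ is equivalent to the pair of real identities $A^\top A+B^\top B=I_N$ and $A^\top B=B^\top A$, which together are exactly the condition $U^*U=I_N$. Hence $U$ unitary implies $\widetilde U$ orthogonal.

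The key step is then immediate: the real random vector $(X^\top,Y^\top)^\top$ is a mean-zero Gaussian vector with scalar covariance matrix, so its law is invariant under every orthogonal transformation of $\bbr^{2N}$. Applying this to $\widetilde U$ gives $\widetilde U(X^\top,Y^\top)^\top\eid(X^\top,Y^\top)^\top$, which is precisely $UZ\eid Z$. Since the right-hand side has i.i.d.\ standard complex normal coordinates, so does $UZ$, which is the assertion.

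I expect the only mild obstacle to be the bookkeeping in verifying the unitary--orthogonal correspondence $\widetilde U^\top\widetilde U=I_{2N}\iff U^*U=I_N$; everything else is routine. A self-contained alternative that sidesteps the realification is to argue directly with densities: under $z\mapsto Uz$ the density at $w$ becomes proportional to $\exp(-c\,(U^{-1}w)^*(U^{-1}w))=\exp(-c\,\|w\|^2)$ multiplied by the Jacobian of the map, and the Jacobian of a $\bbc$-linear unitary map has modulus one, so the density is unchanged and $UZ\eid Z$ follows.
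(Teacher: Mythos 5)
Your proof is correct and complete: the realification $\bbc^N\cong\bbr^{2N}$, the verification that $U^*U=I_N$ is equivalent to $\widetilde U^\top\widetilde U=I_{2N}$, and the orthogonal invariance of a mean-zero real Gaussian vector with scalar covariance together give $UZ\eid Z$ exactly as claimed; the density alternative you sketch is equally valid, since the real Jacobian determinant of the $\bbc$-linear map $U$ is $|\det U|^2=1$. Note that the paper itself offers no proof to compare against --- Fact \ref{f3} is one of the facts whose proofs are declared ``elementary'' and omitted in Section \ref{sec:facts} --- and your argument is precisely the standard verification that would fill that gap.
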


The next fact has essentially been proved in page 386 of \cite{nica:speicher:2006}. As mentioned therein, an $N\times N$ Haar unitary matrix is a random matrix distributed according to the Haar measure on the group of $N\times N$ unitary matrices. Before stating the fact, we need to introduce a few notations. Let $S_n$ denote the group of permutations on $\{1,\ldots,n\}$, for $n\ge1$. A permutation is identified with the partition of $\{1,\ldots,n\}$, induced by the cyclic decomposition. For $\alpha\in S_n$, $\#\alpha$ denotes the number of blocks in $\alpha$, that is, the number of cycles. For any block $\theta\in\alpha$, $\#\theta$ denotes the length of the cycle $\theta$. For example, for
\[
\alpha\in S_4
\] defined by
\[
\alpha(1)=2,\,\alpha(2)=4,\,\alpha(3)=3,\,\alpha(4)=1\,,
\]
we write
\[
\alpha=\{(1,2,4),(3)\}\,,
\]
and hence $\#\alpha=2$. If the elements of $\alpha$, as listed above, are labelled as $\theta_1$ and $\theta_2$, respectively, then
\[
\#\theta_1=3,\,\#\theta_2=1\,.
\]

\begin{fact}\label{f4}
For a fixed $N$, let $A$ and $B$ be deterministic $N\times N$ Hermitian matrices. If $U$ is an $N\times N$ Haar unitary matrix, then for any $1\le n\le N$ and $k_1,\ldots,k_n\ge0$,
\begin{eqnarray*}
&&\E\Tr\left[\prod_{i=1}^n\left(UA^{k_i}U^*B\right)\right]\\
&=&\sum_{\alpha,\beta\in S_n}Wg(N,\alpha^{-1}\beta)\left(\prod_{\theta\in\alpha}\Tr\left(A^{\sum_{i\in\theta}k_i}\right)\right)\left(\prod_{\theta\in\beta^{-1}\gamma}\Tr\left(B^{\#\theta}\right)\right)\,,
\end{eqnarray*}
where $Wg$ is the Weingarten function defined by
\[
Wg(N,\alpha)=\E\left[U(1,1)\ldots U(n,n)\overline{U(1,\alpha(1))}\ldots\overline{U(n,\alpha(n))}\right]\,,
\]
for $\alpha\in S_n$, $N\ge n$ and
\[
\gamma=\{(1,\ldots,n)\}\in S_n\,.
\]
\end{fact}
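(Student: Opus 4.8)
The plan is to reduce the whole computation to the Weingarten calculus for the Haar unitary group, since the only randomness in the expression is carried by $U$. First I would expand each factor entrywise, writing $(UA^{k_i}U^*B)(a,b)=\sum_{c,d,e}U(a,c)A^{k_i}(c,d)\overline{U(e,d)}B(e,b)$ through $U^*(d,e)=\overline{U(e,d)}$, and then form the trace of the product over $i=1,\dots,n$. The trace links the outer indices cyclically, $a_{n+1}=a_1$, so that with $\gamma=\{(1,\dots,n)\}$ the full cycle one has $a_{i+1}=a_{\gamma(i)}$. The outcome is that $\E\Tr[\prod_i(UA^{k_i}U^*B)]$ becomes a sum over all index families $a_\bullet,c_\bullet,d_\bullet,e_\bullet$ of the deterministic entries $\prod_i A^{k_i}(c_i,d_i)\prod_i B(e_i,a_{i+1})$, weighted by a single Haar average $\E[\prod_i U(a_i,c_i)\overline{U(e_i,d_i)}]$ of $n$ factors $U$ against $n$ factors $\overline{U}$.

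The second step is to evaluate that average. Since $1\le n\le N$, the Weingarten function is nondegenerate and the standard formula applies without corrections: the average equals $\sum_{\alpha,\beta\in S_n}Wg(N,\alpha^{-1}\beta)$ times a product of $2n$ Kronecker deltas, in which one permutation pairs the column index of each $U$ with the column index of a matched $\overline U$ (the slots carrying the $A^{k_i}$) and the other pairs their row indices (the slots carrying $B$). That this normalisation matches the definition of $Wg$ given in the statement is checked by specialising all the $U,\overline U$ rows to coincide and letting their columns follow a fixed permutation $\rho$: the deltas then force the row pairing to be the identity and the column pairing to be $\rho^{-1}$, and the single surviving weight is exactly $Wg(N,\rho)$.

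Third, I would resolve the Kronecker deltas and carry out the free summations, which is where the two products of traces are born. The column pairing identifies the trailing index of each $A^{k_i}$ with a leading index of another, so that summing over $c_\bullet,d_\bullet$ closes the $A$-entries into one trace per cycle; because powers of $A$ commute, the exponents along a cycle simply add, producing $\prod_{\theta\in\alpha}\Tr(A^{\sum_{i\in\theta}k_i})$. The row pairing links the indices flanking $B$, but here the cyclic shift $\gamma$ inherited from the trace intervenes, so that summing over $a_\bullet,e_\bullet$ closes $\prod_i B(e_i,a_{i+1})$ into one trace per cycle of the permutation formed by composing the row pairing with $\gamma$; since every factor is the same matrix $B$, each such trace is just $\Tr(B^{\#\theta})$, the exponent being the cycle length. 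In the convention of the statement this composite permutation is recorded as $\beta^{-1}\gamma$. Reinstating the surviving weight $Wg(N,\alpha^{-1}\beta)$ and summing over $\alpha,\beta\in S_n$ then yields the asserted identity.

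The main obstacle is not analytic but purely combinatorial: keeping the four index families $a_\bullet,c_\bullet,d_\bullet,e_\bullet$ and the two pairing permutations straight, aligning the direction and composition convention in the Weingarten formula with the particular definition of $Wg$ adopted here, and---most delicately---composing the row pairing with the cyclic shift $\gamma$ correctly, so that the $B$-traces are indexed by $\beta^{-1}\gamma$ and the weight by $\alpha^{-1}\beta$ rather than by some conjugate. Because $Wg(N,\cdot)$ is a class function and the two products of traces depend only on cycle structure, the relabelling that casts the sum in the stated form is routine once the convention is fixed; all of the genuine content lies in the Weingarten formula itself, exactly as in the computation on page 386 of \cite{nica:speicher:2006}.
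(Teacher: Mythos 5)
Your proposal is correct and follows essentially the same route as the paper, which gives no independent proof but invokes precisely this Weingarten-calculus computation from page 386 of \cite{nica:speicher:2006}: entrywise expansion, the unitary Weingarten formula (valid since $n\le N$), and resolution of the Kronecker deltas into traces over the cycles of $\alpha$ and of $\beta^{-1}\gamma$. The one fine point, which you already flag as the convention-matching step, is that the $A$-trace product depends on the actual blocks of $\alpha$ rather than just its cycle type; the relabelling needed to reach the stated form is by inverses of the pairing permutations, which preserves blocks as well as cycle types, so the argument goes through.
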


The following has essentially been proved in the course of the proof of Theorem 23.14 of \cite{nica:speicher:2006}.

\begin{fact}\label{f5}
For a fixed $n\ge1$ and $\alpha\in S_n$,
\[
\lim_{N\to\infty}N^{2n-\#\alpha}Wg(N,\alpha)=\phi(\alpha)\in\bbr\,.
\]
Furthermore, if $(\A,\varphi)$, $w$ and $y$ are as in the statement of Theorem \ref{t1}, then for $n\ge1$ and $k_1,\ldots,k_n\ge0$,
\begin{eqnarray*}
&&\varphi\left(w^{k_1}y\ldots w^{k_n}y\right)\\
&=&\sum_{\alpha,\beta\in S_n:\#(\alpha^{-1}\beta)+\#\alpha+\#(\beta^{-1}\gamma)=2n+1}\Biggl[\phi(\alpha^{-1}\beta)\left(\prod_{\theta\in\alpha}\varphi\left(w^{\sum_{i\in\theta}k_i}\right)\right)\\
&&\,\,\,\,\,\,\,\,\,\,\,\,\,\,\,\,\left(\prod_{\theta\in\beta^{-1}\gamma}\varphi\left(y^{\#\theta}\right)\right)\Biggr]\,.
\end{eqnarray*}
\end{fact}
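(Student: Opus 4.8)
The statement to be proved, Fact \ref{f5}, has two parts, and the plan is to dispose of the first quickly and to concentrate on the second. For the Weingarten asymptotics I would invoke the Collins--\'Sniady expansion: $Wg(N,\alpha)$ depends only on the cycle type of $\alpha$ and has leading behaviour
\[
Wg(N,\alpha)=N^{-(2n-\#\alpha)}\Bigl(\prod_{\theta\in\alpha}(-1)^{\#\theta-1}\,\mathrm{Cat}_{\#\theta-1}\Bigr)\bigl(1+O(N^{-2})\bigr),
\]
where $\mathrm{Cat}_k$ is the $k$-th Catalan number. Multiplying by $N^{2n-\#\alpha}$ and letting $N\to\infty$ gives the limit and identifies $\phi(\alpha)=\prod_{\theta\in\alpha}(-1)^{\#\theta-1}\mathrm{Cat}_{\#\theta-1}$, the multiplicative M\"obius function on non-crossing partitions. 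As this is exactly the content recorded in the proof of Theorem 23.14 of \cite{nica:speicher:2006}, I would cite it rather than reprove it.

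For the moment formula the strategy is to realise the abstract free pair $(w,y)$ by a concrete asymptotic matrix model and then read the formula off Fact \ref{f4}. Since $\nu_\lambda$ and $\mu$ are compactly supported, I would choose deterministic $N\times N$ Hermitian matrices $D_N$ (diagonal) and $B_N$ with $\esd(D_N)\to\nu_\lambda$ and $\esd(B_N)\to\mu$ and with convergence of all moments, so that $\tfrac1N\Tr(D_N^m)\to\varphi(w^m)$ and $\tfrac1N\Tr(B_N^m)\to\varphi(y^m)$ for every $m\ge0$ (this uses compact support, which upgrades weak convergence to moment convergence), together with an independent $N\times N$ Haar unitary $U$. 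The computation carried out in the proof of Theorem 23.14 of \cite{nica:speicher:2006} shows that $UD_NU^*$ and $B_N$ are asymptotically freely independent with limiting distributions $\nu_\lambda$ and $\mu$; hence their joint limit is precisely $(w,y)$, and in particular
\[
\varphi\bigl(w^{k_1}y\cdots w^{k_n}y\bigr)=\lim_{N\to\infty}\frac1N\E\Tr\Bigl[\prod_{i=1}^n\bigl(UD_N^{k_i}U^*B_N\bigr)\Bigr].
\]

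Next I would apply Fact \ref{f4} with $A=D_N$ and $B=B_N$, writing the right-hand side as a finite sum over $\alpha,\beta\in S_n$ of the quantities
\[
\frac1N\,Wg(N,\alpha^{-1}\beta)\Bigl(\prod_{\theta\in\alpha}\Tr(D_N^{\sum_{i\in\theta}k_i})\Bigr)\Bigl(\prod_{\theta\in\beta^{-1}\gamma}\Tr(B_N^{\#\theta})\Bigr).
\]
Because there are only $(n!)^2$ terms, the limit may be taken term by term. Substituting $\Tr(D_N^m)=N\cdot\tfrac1N\Tr(D_N^m)\sim N\varphi(w^m)$, the analogous relation for $B_N$, and the Weingarten asymptotics of the first part, each term is a convergent product of $\phi$- and $\varphi$-factors multiplied by a power of $N$ whose exponent equals $\#\alpha+\#(\alpha^{-1}\beta)+\#(\beta^{-1}\gamma)-(2n+1)$.

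The decisive step is the genus bound $\#\alpha+\#(\alpha^{-1}\beta)+\#(\beta^{-1}\gamma)\le 2n+1$. Writing $|\sigma|:=n-\#\sigma$ for the Cayley distance to the identity, this is equivalent to $|\alpha|+|\alpha^{-1}\beta|+|\beta^{-1}\gamma|\ge n-1$, which follows from two applications of the triangle inequality, namely $|\alpha|+|\alpha^{-1}\beta|\ge|\beta|$ and $|\beta|+|\beta^{-1}\gamma|\ge|\gamma|=n-1$, the last equality because $\gamma$ is a single $n$-cycle. Thus every exponent is $\le0$, every term with strict inequality vanishes in the limit, and only the terms with $\#\alpha+\#(\alpha^{-1}\beta)+\#(\beta^{-1}\gamma)=2n+1$ survive, each contributing exactly $\phi(\alpha^{-1}\beta)\prod_{\theta\in\alpha}\varphi(w^{\sum_{i\in\theta}k_i})\prod_{\theta\in\beta^{-1}\gamma}\varphi(y^{\#\theta})$, which is the claimed formula. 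I expect the main obstacle to be precisely this power-counting: correctly tracking the three $N$-exponents, establishing the genus bound, and verifying that its equality case indexes exactly the constrained sum in the statement; by contrast, the interchange of limit and sum is trivial (finitely many terms), and the moment convergence of the deterministic models is routine once compact support is used.
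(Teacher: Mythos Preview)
Your proposal is correct and follows essentially the same route as the reference the paper defers to: the paper does not give its own proof of Fact~\ref{f5} but simply observes that both claims are contained in the proof of Theorem~23.14 of \cite{nica:speicher:2006}, and your argument is a faithful unpacking of that proof (Weingarten asymptotics for the first part; realising the abstract free pair by a Haar-conjugated deterministic model, applying Fact~\ref{f4}, and doing the genus/power-counting for the second). The only cosmetic redundancy is that you invoke Theorem~23.14 to identify the limit with $\varphi(w^{k_1}y\cdots w^{k_n}y)$ and then reproduce the very computation that underlies that theorem; this is harmless, and your derivation of the bound $\#\alpha+\#(\alpha^{-1}\beta)+\#(\beta^{-1}\gamma)\le 2n+1$ via the Cayley-distance triangle inequality matches (23.4) in \cite{nica:speicher:2006}, which the paper also cites.
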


The following result is Corollary 2 of \cite{azoff:1974}.

\begin{fact}\label{f6}
For a fixed $N\in\bbn$, there exists a measurable map
\[
\psi:\bbc^{N\times N}\to\bbc^{N\times N}\,,
\]
where $\bbc^{N\times N}$ is the space of all $N\times N$ matrices with complex entries, such that $\psi(M)$ is an unitary matrix for every $M\in\bbc^{N\times N}$, and
\[
\psi(M)^*M\psi(M)
\]
is upper triangular for every $M$.
\end{fact}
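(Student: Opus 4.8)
The statement is the existence of a measurable version of the Schur triangularization, so the plan is to recast it as a measurable-selection problem and then invoke a selection theorem. First I would fix $N$ and introduce the compact group $U(N)\subseteq\bbc^{N\times N}$ of unitary matrices together with the closed linear subspace $\mathcal T\subseteq\bbc^{N\times N}$ of upper triangular matrices. Define the correspondence
\[
\Gamma(M):=\left\{U\in U(N):U^*MU\in\mathcal T\right\},\qquad M\in\bbc^{N\times N}\,.
\]
The classical Schur triangularization theorem guarantees that every complex square matrix is unitarily similar to an upper triangular one, so $\Gamma(M)\neq\emptyset$ for every $M$. A map $\psi$ as required in the statement is exactly a measurable selection of $\Gamma$, that is, a measurable $\psi$ with $\psi(M)\in\Gamma(M)$ for all $M$.

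Next I would record the structural properties that make a selection theorem applicable. The map $(M,U)\mapsto U^*MU$ is continuous on $\bbc^{N\times N}\times U(N)$, and $\mathcal T$ is closed; hence the graph
\[
G:=\left\{(M,U)\in\bbc^{N\times N}\times U(N):U^*MU\in\mathcal T\right\}
\]
is closed. Because $U(N)$ is compact, each fibre $\Gamma(M)$ is a non-empty compact subset of $U(N)$, and the closed graph makes $\Gamma$ an upper hemicontinuous, hence Borel measurable, compact-valued correspondence into the Polish space $U(N)$.

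With these facts in hand, I would apply the Kuratowski--Ryll-Nardzewski measurable selection theorem to $\Gamma$, obtaining a Borel-measurable map $\psi:\bbc^{N\times N}\to U(N)$ with $\psi(M)\in\Gamma(M)$ for every $M$. By construction $\psi(M)$ is unitary and $\psi(M)^*M\psi(M)$ is upper triangular, which is precisely the assertion. The one genuinely non-trivial point, and the reason a bare appeal to Schur's theorem does not suffice, is that the Schur form is not unique; there is therefore no canonical closed-form choice of $U$, and a selection theorem (or an equivalent explicit construction) is unavoidable.

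As an explicit alternative to the abstract selection theorem, I would proceed by induction on $N$. The coefficients of the characteristic polynomial of $M$ are polynomials in the entries of $M$, so one can choose a single eigenvalue $\lambda(M)$ measurably, for instance by ordering the roots lexicographically in $(\Re,\Im)$. One then selects a unit vector measurably in the non-zero kernel of $M-\lambda(M)I$, completes it to a unitary $U_1(M)$ by a measurable Gram--Schmidt procedure, and passes to the $(N-1)\times(N-1)$ lower diagonal block of $U_1(M)^*MU_1(M)$, which depends measurably on $M$; recursing on this block and assembling the unitaries yields $\psi$. The delicate step here is exactly the measurable choice of a unit kernel vector from a measurably varying subspace, which is itself a (simpler) instance of measurable selection.
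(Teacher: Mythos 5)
The paper offers no proof of Fact \ref{f6} at all: it is quoted as Corollary 2 of \cite{azoff:1974}, and the proofs in Section \ref{sec:facts} are explicitly omitted. Your argument is correct, and it is in the same selection-theoretic spirit as the cited reference, though via a different named theorem: Azoff deduces his Corollary 2 from the Federer--Morse theorem (a continuous surjection between compact metric spaces admits a Borel cross-section), applied to the projection $(M,U)\mapsto M$ restricted to the compact set of Schur pairs $\left\{(M,U):\|M\|\le k,\ U\in U(N),\ U^*MU\ \text{upper triangular}\right\}$, and then patches over countably many balls $\|M\|\le k$; you instead treat the compact-valued, closed-graph correspondence $\Gamma$ globally and invoke Kuratowski--Ryll-Nardzewski. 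The two routes are interchangeable, and the step you rightly isolated is the only one needing care: closed graph plus compactness of $U(N)$ gives that $\{M:\Gamma(M)\cap F\neq\emptyset\}$ is closed for closed $F$, and writing an open set as a countable union of closed sets yields the weak measurability that KRN requires. Your inductive alternative is also viable and buys a selection-theorem-free, elementary proof; its delicate point (measurable choice of a unit kernel vector) can be discharged explicitly, e.g.\ via the Borel map $M\mapsto\lim_{t\downarrow0}\,t\bigl(A^*A+tI\bigr)^{-1}$ with $A=M-\lambda(M)I$, which gives the orthogonal projection onto $\ker A$, after which one normalizes its first non-zero column and completes by Gram--Schmidt. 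Either of your arguments could stand in for the citation.
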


\section{Proofs}\label{sec:proofs}

\begin{proof}[Proof of Theorem \ref{t1}]
Let $(\A,\varphi)$, $w$ and $y$ be as in the statement. In order to prove the claim, all that needs to be shown is that
\begin{equation}
\label{t1.eq3}\lim_{N\to\infty}N^{-1}\E\left[\Tr\left(W_N^{k_1}Y_N\ldots W_N^{k_n}Y_N\right)\right]=\varphi\left(w^{k_1}y\ldots w^{k_n}y\right)\,,
\end{equation}
for fixed $n\ge1$ and $k_1,\ldots,k_n\ge0$. 

The foremost task is to show that the expectation on the left hand side of \eqref{t1.eq3} exists. To that end, it suffices to show that there exists $N_0$ such that
\begin{equation}
\label{t1.eq7}\E\left[|Y_N(i,j)|^n\right]<\infty\text{ for all }N\ge N_0,\,1\le i,j\le N\,.
\end{equation}
Since
\begin{eqnarray*}
\sum_{i,j=1}^N\E\left[|Y_N(i,j)|^{2n}\right]
&\le&\E\left[\Tr(Y_N^{2n})\right]\,,
\end{eqnarray*}
and \eqref{f2.eq1} implies that right hand side is finite for $N$ large,  an $N_0$ satisfying \eqref{t1.eq7} exists.

Proceeding towards \eqref{t1.eq3}, fix $N\ge N_0$ and let 
\[
\F:=\sigma\left(X_N,Y_N\right)\,,
\]
that is $\F$ is the smallest $\sigma$-field with respect to which the entries of $X_N$ and $Y_N$ are measurable. Let $U_N$ be a Haar unitary matrix, independent of $\F$. Fact \ref{f3} implies that conditioned on $U_N$, the entries of $U_NX_N$ are i.i.d.\ standard complex Normal. That is, the conditional joint distribution of the entries of $U_NX_N$, given $U_N$, is same as that of $X_N$. Therefore,
\[
\left(U_NW_NU_N^*,Y_N\right)\eid\left(W_N,Y_N\right)\,.
\]
As a result,
\begin{eqnarray*}
C_N&:=&\E\left[\Tr\left(W_N^{k_1}Y_N\ldots W_N^{k_n}Y_N\right)\right]\\
&=&\E\left[\Tr\left((U_NW_NU_N^*)^{k_1}Y_N\ldots(U_NW_NU_N^*)^{k_n}Y_N\right)\right]\\
&=&\E\left[\Tr\left(U_NW_N^{k_1}U_N^*Y_N\ldots U_NW_N^{k_n}U_N^*Y_N\right)\right]\\
&=&\E\,\E_\F\left[\Tr\left(U_NW_N^{k_1}U_N^*Y_N\ldots U_NW_N^{k_n}U_N^*Y_N\right)\right]\,,
\end{eqnarray*}
where $\E_\F$ is the conditional expectation given $\F$. By an appeal to Fact \ref{f4},
\begin{eqnarray*}
&&\E_\F\left[\Tr\left(U_NW_N^{k_1}U_N^*Y_N\ldots U_NW_N^{k_n}U_N^*Y_N\right)\right]\\
&=&\sum_{\alpha,\beta\in S_n}Wg(N,\alpha^{-1}\beta)\left(\prod_{\theta\in\alpha}\Tr\left(W_N^{\sum_{i\in\theta}k_i}\right)\right)\left(\prod_{\theta\in\beta^{-1}\gamma}\Tr\left(Y_N^{\#\theta}\right)\right)\,.
\end{eqnarray*} 
Taking the unconditional expectation of both sides, and using the independence of $W_N$ and $Y_N$, we get that
\begin{equation}\label{t1.eq2}
C_N=\sum_{\alpha,\beta\in S_n}Wg(N,\alpha^{-1}\beta)\E\left(\prod_{\theta\in\alpha}\Tr\left(W_N^{\sum_{i\in\theta}k_i}\right)\right)\E\left(\prod_{\theta\in\beta^{-1}\gamma}\Tr\left(Y_N^{\#\theta}\right)\right)\,.
\end{equation}

It is well known that for all $k\in\bbn$,
\begin{eqnarray*}
\lim_{N\to\infty}\E\left(N^{-1}\Tr(W_N^k)\right)&=&\varphi(w^k)\,,\\
\lim_{N\to\infty}\Var\left(N^{-1}\Tr(W_N^k)\right)&=&0\,.
\end{eqnarray*}
Combining the above with Fact \ref{f2} yields that
\begin{equation}
\label{t1.eq4}\lim_{N\to\infty}\E\left(\prod_{\theta\in\alpha}N^{-1}\Tr\left(W_N^{\sum_{i\in\theta}k_i}\right)\right)=\prod_{\theta\in\alpha}\varphi\left(w^{\sum_{i\in\theta}k_i}\right)\,.
\end{equation}
Similarly, \eqref{f2.eq1}, \eqref{f2.eq2} and Fact \ref{f2} together imply that
\begin{equation}
\label{t1.eq5}\lim_{N\to\infty}\E\left(\prod_{\theta\in\beta^{-1}\gamma}N^{-1}\Tr\left(Y_N^{\#\theta}\right)\right)=\prod_{\theta\in\beta^{-1}\gamma}\varphi\left(y^{\#\theta}\right)\,.
\end{equation}
Rewrite \eqref{t1.eq2} as
\begin{eqnarray*}
&&N^{-1}C_N\\
&=&\sum_{\alpha,\beta\in S_n}N^{\#\alpha+\#(\beta^{-1}\gamma)-1}Wg(N,\alpha^{-1}\beta)\\
&&\,\,\,\,\,\,\,\,\,\E\left(\prod_{\theta\in\alpha}N^{-1}\Tr\left(W_N^{\sum_{i\in\theta}k_i}\right)\right)\E\left(\prod_{\theta\in\beta^{-1}\gamma}N^{-1}\Tr\left(Y_N^{\#\theta}\right)\right)\,.
\end{eqnarray*}
The first claim of Fact \ref{f5} implies that for fixed $\alpha,\beta\in S_n$,
\begin{eqnarray}
\nonumber N^{\#\alpha+\#(\beta^{-1}\gamma)-1}Wg(N,\alpha^{-1}\beta)
\label{t1.eq6}&=&O\left(N^{\#(\alpha^{-1}\beta)+\#\alpha+\#(\beta^{-1}\gamma)-2n-1}\right)\\
\nonumber&=&O(1)\,,
\end{eqnarray}
because
\begin{eqnarray*}
\#\alpha+\#(\alpha^{-1}\beta)+\#(\beta^{-1}\gamma)\le2n+1\,,
\end{eqnarray*}
as shown in (23.4) and the following display on page 387 in \cite{nica:speicher:2006}. Therefore, letting $N\to\infty$ in \eqref{t1.eq6} and using the first claim of Fact \ref{f5} along with \eqref{t1.eq4} and \eqref{t1.eq5}, we get that
\begin{eqnarray*}
&&\lim_{N\to\infty}N^{-1}C_N\\
&=&\sum_{\alpha,\beta\in S_n:\#(\alpha^{-1}\beta)+\#\alpha+\#(\beta^{-1}\gamma)=2n+1}\Biggl[\phi(\alpha^{-1}\beta)\left(\prod_{\theta\in\alpha}\varphi\left(w^{\sum_{i\in\theta}k_i}\right)\right)\\
&&\,\,\,\,\,\,\,\,\,\,\,\,\,\,\,\,\left(\prod_{\theta\in\beta^{-1}\gamma}\varphi\left(y^{\#\theta}\right)\right)\Biggr]\,.
\end{eqnarray*}
The second claim of Fact \ref{f5} shows that the right hand side of the above equation is same as that of \eqref{t1.eq3}.
Thus, the latter follows, which completes the proof.
\end{proof}

\begin{proof}[Proof of Theorem \ref{t.main}]
Since $\mu$ is compactly supported, let $M>1$ be such that
\[
\mu\left([-(M-1),M-1]\right)=1\,.
\]

Letting $\psi$ be as in Fact \ref{f6}, define
\[
P_N=\psi(Y_N)\,,
\]
and
\[
T_N:=P_N^*Y_NP_N\,,
\]
which is an upper triangular matrix. Define an $N\times N$ matrix $T^\prime_N$ by
\[
T^\prime_N(i,j):=
\begin{cases}
T_N(i,j),&i\neq j\,,\\
T_N(i,i)\one(|T_N(i,i)|\le M),&i=j\,,
\end{cases}
\]
and let
\begin{equation}\label{eq.defynp}
Y^\prime_N:=P_NT_N^\prime P_N^*\,.
\end{equation}
In order to complete the proof, it suffices to show that for a fixed polynomial $p$ satisfying the hypothesis, 
\begin{equation}
\label{t.main.eq1}\eesd\left(p(W_N,Y_N^\prime)\right)\weak{\mathcal L}\left(p(w,y)\right)\,,
\end{equation}
and
\begin{equation}
\label{t.main.eq2}\lim_{N\to\infty}d\left(\eesd\left(p(W_N,Y_N)\right),\eesd\left(p(W_N,Y_N^\prime)\right)\right)=0\,.
\end{equation}

We start with showing \eqref{t.main.eq2}. To that end, note that
\begin{eqnarray*}
N^{-1}\rank(Y_N-Y^\prime_N)&=&N^{-1}\rank(T_N-T_N^\prime)\\
&\le&N^{-1}\#\{1\le i\le N:|T_N(i,i)|>M\}\\
&=&\left(\esd(Y_N)\right)\left([-M,M]^c\right)\,,
\end{eqnarray*}
the inequality in the second line being based on the fact that $T_N-T_N^\prime$ is a diagonal matrix, and hence
\begin{equation}
\label{t.main.eq3}N^{-1}\rank(Y_N-Y^\prime_N)\prob0\,,
\end{equation}
as $N\to\infty$. Fact \ref{f1} and the bounded convergence theorem show that 
\[
\lim_{N\to\infty}\E\left[\frac1N\rank(p(W_N,Y_N)-p(W_N,Y_N^\prime))\right]=0\,.
\]
An appeal to Fact \ref{f7} establishes \eqref{t.main.eq2}.

Proceeding towards \eqref{t.main.eq1}, in view of Theorem \ref{t1} and Remark \ref{rem1}, it suffices to show that \eqref{f2.eq1} and \eqref{f2.eq2} hold with $Y_N$ replaced by $Y_N^\prime$. Equation \eqref{t.main.eq3} and the hypotheses imply that
\[
\esd(Y_N^\prime)\to\mu\,,
\]
weakly in probability, as $N\to\infty$. Since 
\[
\left(\esd(Y_N^\prime)\right)\left([-M,M]^c\right)=\left(\esd(T_N^\prime)\right)\left([-M,M]^c\right)=0,\,N\ge1\,,
\]
and
\[
\mu\left([-M+1,M-1]^c\right)=0\,,
\]
it follows that for a fixed $n\ge1$, as $N\to\infty$,
\[
\int_{-\infty}^\infty x^n\left(\esd(Y_N^\prime)\right)(dx)\prob\int_{-\infty}^\infty x^n\mu(dx)\,.
\]
The observations that 
\[
\frac1N\Tr\left[(Y_N^\prime)^n\right]=\int_{-\infty}^\infty x^n\left(\esd(Y_N^\prime)\right)(dx)\,,
\]
and that the modulus of the above quantity is bounded by $M$, show that \eqref{f2.eq1} and \eqref{f2.eq2} hold, with $Y_N$ replaced by $Y_N^\prime$. Theorem \ref{t1} shows \eqref{t.main.eq1}, which in turn completes the proof.
\end{proof}

\begin{proof}[Proof of Theorem \ref{t3}]
As in the preceding proof, let 
\[
P_N=\psi(Y_N),\,N\ge1\,.
\]
Fix $M>0$ and let $Y_N^\prime$ be as in \eqref{eq.defynp}, $M$ being suppressed in the notation. Theorem \ref{t.main} implies that 
\[
\eesd(Y_N^\prime+W_N)\weak\mu_M\boxplus\nu_\lambda\,,
\]
and
\[
\eesd(Y_N^\prime W_N)\weak\mu_M\boxtimes\nu_\lambda\,,
\]
as $N\to\infty$, where 
\[
\mu_M(B)=\mu(B\cap[-M,M])+\mu\left([-M,M]^c\right)\one(0\in B)\,,
\]
for every Borel set $B\subset\bbr$. Proposition 4.13 and Corollary 6.7 of \cite{bercovici:voiculescu:1993} imply, respectively, that as $M\to\infty$,
\begin{eqnarray*}
\mu_M\boxplus\nu_\lambda&\weak&\mu\boxplus\nu_\lambda\,,\\
\text{and }\mu_M\boxtimes\nu_\lambda&\weak&\mu\boxtimes\nu_\lambda\,.
\end{eqnarray*}
In order to complete the proof, in view of Fact \ref{f1}, it suffices to show that
\begin{equation*}
\lim_{M\to\infty}\limsup_{N\to\infty}\frac1N\E\left[\rank(Y_N-Y_N^\prime)\right]=0\,.
\end{equation*}
However, arguments as in the proof of Theorem \ref{t.main} show that for $M$ such that
\[
\mu(\{-M,M\})=0\,,
\]
it holds that
\[
\limsup_{N\to\infty}\frac1N\E\left[\rank(Y_N-Y_N^\prime)\right]\le\mu\left([-M,M]^c\right)\,.
\]
Hence, the proof follows.
\end{proof}


\end{document}